\newcommand{\N}{\mathbb{N}}
\newtheorem{theorem}{Theorem}
\newtheorem{corollary}{Corollary}
\newtheorem{lemma}{Lemma}
\theoremstyle{remark}
\newtheorem{remark}{Remark}
\newtheorem{example}{Example}
\begin{document}

\title[STARLIKE MAPPINGS]{\large  ON SOME STARLIKE MAPPINGS INVOLVING CERTAIN CONVOLUTION OPERATORS}

\author[K. O. BABALOLA]{K. O. BABALOLA}

\begin{abstract}
This paper makes a modest contribution to the family of starlike univalent mappings in the open unit disk, by the introduction of some new subclasses of them via certain convolution operators. A new univalence condition is given with examples. Some basic characterizations of functions of the new subclasses are also mentioned.
\end{abstract}



\maketitle

\section{Backgound}
First geometry: A domain $D\subset\mathbb{C}$ is said to be starlike with respect to a point $\xi_0\in D$ if the linear segment joining $\xi_0$ to every other point $\xi\in D$ lies entirely in $D$. In more picturesque language, every point of $D$ is visible from $\xi_0$. If $\xi_0=0$, the origin, $D$ is simply called starlike \cite{PL}.

Let $A$ denote the class of functions:
\[
f(z)=z+a_2z^2+...
\]
which are analytic in the open unit disk $E=\{z\in \mathbb{C}: |z|<1\}$. A function $f\in A$ is called starlike if and only if $f$ maps $E$ onto  a starlike domain. Analytically, $f$ is starlike if and only if, for $z\in E$,
\begin{equation}
Re\;\frac{zf'(z)}{f(z)}>0.\, \label{1}
\end{equation}
(see \cite{MI}).

Let $S^\ast$ denote the family of functions starlike in $E$. This family of univalent mappings has recieved so much attention in geometric functions theory, having found applications in many physical problems. In the sequel we employ convolution operators defined in \cite{KO} to identify new subclasses of this important family. A principal discovery in this paper is a new univalence condition in the open unit disk.

Let $f\in A$. For $g(z)=z+b_2z^2+... \in A$, the convolution (or Hadamard
product) of $f(z)$ and $g(z)$ (written as $(f*g)(z)$) is defined as
\[
(f*g)(z)=z+\sum_{k=2}^\infty a_kb_kz^k.
\]
In \cite{KO} we defined the operators $L_n^\sigma:A\rightarrow A$ (using the convolution
$*$) as follows:
\[
L_n^\sigma f(z)=(\tau_\sigma*\tau_{\sigma,n}^{(-1)}*f)(z).
\]
where
\[
\tau_{\sigma,n}(z)=\frac{z}{(1-z)^{\sigma
-(n-1)}},\;\;\sigma-(n-1)>0,
\]
$\tau_\sigma=\tau_{\sigma,0}$ and $\tau_{\sigma,n}^{(-1)}$ such that
\[
(\tau_{\sigma,n}*\tau_{\sigma,n}^{(-1)})(z)=\frac{z}{1-z}
\]
for a fixed real number $\sigma$ and $n\in \N$. We noted that $L_0^\sigma f(z)=L_0^0 f(z)=f(z)$, $L_1^1
f(z)=zf^{\prime}(z)$. Given $f\in A$, we have
\begin{align}
L_n^\sigma f(z)
&=z+\sum_{k=2}^\infty\left\{\prod_{j=0}^{n-1}\left(\frac{\sigma+k-1-j}{\sigma-j}\right)\right\}a_kz^k\nonumber \\
&=z+\sum_{k=2}^\infty\left\{\frac{(\sigma+k-1)!}{\sigma!}\frac{(\sigma-n)!}{(\sigma+k-1-n)!}\right\}a_kz^k\, \label{2}
\end{align}
(see \cite{KO}).

Similarly we defined $l_n^\sigma:A\rightarrow A$ as follows:
\[
l_n^\sigma f(z)=(\tau_{\sigma}^{(-1)}*\tau_{\sigma,n}*f)(z)
\]
so that for $f\in A$ we have
\[
L_n^\sigma (l_n^\sigma f(z))=l_n^\sigma (L_n^\sigma f(z))=f(z).
\]

In this paper we further restrict $\sigma$ and $n$ by requiring $\sigma\geq n+1$ in the definitions above. Thus the following relations hold:
\begin{equation}
(\sigma-n)L_{n+1}^\sigma f(z)=(\sigma-(n+1))L_n^\sigma f(z)+z(L_n^\sigma f(z))'\, \label{3}
\end{equation}
from which we have
\begin{equation}
(\sigma-n)(L_{n+1}^\sigma f(z))'=(\sigma-n)(L_n^\sigma f(z))'+z(L_n^\sigma f(z))''.\, \label{4}
\end{equation}

Now we say:
\medskip

 {\sc Definition.} A function $f\in A$ belongs to the class $S_n^\sigma$ if and only if
\[
Re\frac{L_{n+1}^\sigma f(z)}{L_n^\sigma f(z)}>\frac{\sigma-(n+1)}{\sigma-n},\;\;\;\sigma\geq n+1,\;\;n\in\N.
\]

Observing from ~(\ref{3}) that
\[
\frac{L_{n+1}^\sigma f(z)}{L_n^\sigma f(z)}=\frac{\sigma-(n+1)}{\sigma-n}+\frac{z(L_n^\sigma f(z))'}{(\sigma-n)L_n^\sigma f(z)}
\]
and the fact that $L_0^\sigma f(z)=f(z)$ we have the following remarks.

\begin{remark}

{\rm(a)} $S_0^\sigma\equiv S^\ast$ and

{\rm(b)} $f\in S_n^\sigma$ if and only if $L_n^\sigma f(z)$ is starlike.

\end{remark}

Following from inclusion relations we will deduce that all members of the class $S_n^\sigma$ are starlike univalent in $E$. We also give a univalence condition based on the inclusion relations and provide some examples. Furthermore we give some basic properties of the class, namely, integral representation, coefficient inequalities, closure under certain integral operators. 

This paper is organised as follows: In the next section we give some preliminary lemmas while Section 3 contains the main results characterizing $S_n^\sigma$. 

\section{Preliminary Lemmas}

Let $P$ denote the class of functions $p(z)=1+c_1z+c_2z^2+...$ which are regular in $E$ and satisfy Re $p(z)>0$, $z\in E$.

\begin{lemma}[\cite{BO,SM}]
Let $u=u_1+u_2i$, $v=v_1+v_2i$ and $\psi$ a complex-valued function satisfying: {\rm(a)} $\psi(u,v)$ is continuous in a domain $\Omega$ of $\mathbb{C}^2$, {\rm(b)} $(1,0)\in\Omega$ and {\rm Re} $\psi(1,0)>0$ and {\rm(c)} {\rm Re} $\psi(u_2i, v_1)\leq 0$ when $(u_2i, v_1)\in\Omega$ and $2v_1\leq -(1+u_2^2)$. If $p(z)=1+c_1z+c_2z^2+...$ satisfies $(p(z),zp'(z))\in\Omega$ and {\rm Re} $\psi(p(z),zp'(z))>0$ for $z\in E$, then {\rm Re} $p(z)>0$ in $E$.
\end{lemma}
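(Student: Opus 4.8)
The plan is to argue by contradiction, reducing the problem to the behaviour of $p$ at a boundary point of the region where $\mathrm{Re}\,p>0$, and to control the pair $(p(z_0),z_0p'(z_0))$ there by means of a maximum-modulus (Jack / Clunie--Jack) argument. Concretely, suppose the conclusion fails. Since $p(0)=1$ has positive real part and $p$ is analytic, the idea is to pass to the M\"obius image $w(z)=\dfrac{1-p(z)}{1+p(z)}$, which is analytic wherever $p\neq -1$ and satisfies $w(0)=0$; the condition $\mathrm{Re}\,p(z)>0$ is equivalent to $|w(z)|<1$. If $\mathrm{Re}\,p>0$ were to fail somewhere in $E$, then $|w|$ would reach the value $1$, so there would be a smallest radius $r_0\in(0,1)$ and a point $z_0$ with $|z_0|=r_0$ at which $|w(z_0)|=1$ while $|w(z)|<1$ for $|z|<r_0$. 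On the closed disk $|z|\le r_0$ we would then have $\mathrm{Re}\,p\ge 0$, hence $p\neq -1$, so $w$ is analytic there.

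First I would record what happens at $z_0$. Since $|w(z_0)|=1$ forces $\mathrm{Re}\,p(z_0)=0$, we may write $p(z_0)=iu_2$ with $u_2$ real, so $z_0$ lies in the ``imaginary axis'' regime contemplated by hypothesis (c). Next I would invoke Jack's lemma: as $|w|$ attains its maximum over $|z|\le r_0$ at the boundary point $z_0$, there is a real number $k\ge 1$ with $z_0w'(z_0)=k\,w(z_0)$. Differentiating $w=(1-p)/(1+p)$ gives $z_0w'(z_0)=-2z_0p'(z_0)/(1+p(z_0))^2$, and substituting $w(z_0)=(1-p(z_0))/(1+p(z_0))$ the factors $1+p(z_0)$ cancel, leaving
\[
z_0p'(z_0)=-\tfrac{k}{2}\bigl(1-p(z_0)^2\bigr)=-\tfrac{k}{2}\bigl(1+u_2^2\bigr).
\]
Thus $z_0p'(z_0)$ is real; writing it as $v_1$ we obtain $v_2=0$ and, since $k\ge 1$, the inequality $2v_1\le -(1+u_2^2)$.

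This places $(p(z_0),z_0p'(z_0))=(iu_2,v_1)$ exactly in the configuration of hypothesis (c); moreover $(p(z_0),z_0p'(z_0))\in\Omega$ because $z_0\in E$. Hence $\mathrm{Re}\,\psi(iu_2,v_1)\le 0$, contradicting the assumption $\mathrm{Re}\,\psi(p(z),zp'(z))>0$ for every $z\in E$. The contradiction forces $\mathrm{Re}\,p(z)>0$ throughout $E$, while hypothesis (b) ensures the situation is non-vacuous at the centre, where $(p(0),0)=(1,0)$ with $\mathrm{Re}\,\psi(1,0)>0$.

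I expect the main obstacle to be the justification of the boundary point $z_0$ together with the legitimacy of the Jack's-lemma step: one must verify that $w$ is genuinely analytic up to and including $|z|=r_0$ (which follows from $\mathrm{Re}\,p\ge 0$, hence $p\neq -1$, on the closed subdisk) and that $z_0$ is a true interior point of $E$, so that the standing hypotheses on $\psi$ apply at $(p(z_0),z_0p'(z_0))$. The algebraic identity for $z_0p'(z_0)$ is routine once the cancellation of $1+p(z_0)$ is observed; the real content is the geometric extremal principle that pins $z_0p'(z_0)$ to the negative real axis with the stated magnitude $-\tfrac{k}{2}(1+u_2^2)$.
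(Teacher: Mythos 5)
Your proof is correct, but there is nothing in the paper to compare it against: this statement is Lemma 1 of the preliminaries, quoted with citations to Miller--Mocanu and Babalola--Opoola, and the paper gives no proof of it at all. Judged on its own, your argument is sound and is essentially the classical one. The chain works: if $\mathrm{Re}\,p$ failed to be positive, there is a first radius $r_0$ with $\mathrm{Re}\,p>0$ for $|z|<r_0$ and $\mathrm{Re}\,p(z_0)=0$ for some $|z_0|=r_0$; on $|z|\le r_0$ one has $\mathrm{Re}\,p\ge 0$, hence $p\ne -1$, so $w=(1-p)/(1+p)$ is analytic there with $w(0)=0$, $|w|<1$ inside and $|w(z_0)|=1$; Jack's lemma gives $z_0w'(z_0)=kw(z_0)$ with $k\ge 1$, and the computation $w'=-2p'/(1+p)^2$ yields $z_0p'(z_0)=-\tfrac{k}{2}\bigl(1-p(z_0)^2\bigr)=-\tfrac{k}{2}\bigl(1+u_2^2\bigr)$, i.e.\ $v_1$ real with $2v_1\le -(1+u_2^2)$; since $(p(z_0),z_0p'(z_0))\in\Omega$ by hypothesis, condition (c) forces $\mathrm{Re}\,\psi(p(z_0),z_0p'(z_0))\le 0$, contradicting the assumed positivity. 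This is in substance the route of the cited Miller--Mocanu paper: their key lemma is exactly the boundary statement you extract via Jack's lemma (at a first point where $\mathrm{Re}\,p$ vanishes, $z_0p'(z_0)$ is real and at most $-\tfrac{1}{2}(1+u_2^2)$), though they establish a sharpened form of Jack's lemma themselves rather than quoting it. Two minor points: condition (b) plays no role in the contradiction (as you implicitly note, it is forced anyway by evaluating the hypothesis at $z=0$, where $(p(0),0)=(1,0)$); and the existence of $z_0$ is cleaner if run on $\mathrm{Re}\,p$ directly, taking $r_0=\sup\{r:\mathrm{Re}\,p>0\ \text{on}\ |z|\le r\}$, so that $w$ is never mentioned before one knows $p\ne -1$ --- a circularity you flag and then resolve correctly.
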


\begin{lemma}[\cite{EE}]
Let $\eta$ and $\mu$ be complex constants and $h(z)$ a convex univalent function in $E$ satisfying $h(0)=1$, and {\rm Re} $(\eta h(z)+\mu)>0$. Suppose $p\in P$ satisfies the differential subordination:
\begin{equation}
p(z)+\frac{zp'(z)}{\eta p(z)+\mu}\prec h(z),\;\;\;z\in E.\, \label{5}
\end{equation}
If the differential equation:
\begin{equation}
q(z)+\frac{zq'(z)}{\eta q(z)+\mu}=h(z),\;\;\;q(0)=1\, \label{6}
\end{equation}
has univalent solution $q(z)$ in $E$, then $p(z)\prec q(z)\prec h(z)$ and $q(z)$ is the best dominant in $~(\ref{5})$.
\end{lemma}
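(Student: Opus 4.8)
The plan is to read the statement as a first-order differential subordination and to reduce it to one geometric lemma on univalent functions together with the convexity of $h$. Write $\psi(r,s)=r+\dfrac{s}{\eta r+\mu}$, so that the hypothesis is $\psi(p(z),zp'(z))\prec h(z)$ and the differential equation is $\psi(q(z),zq'(z))=h(z)$. The tool I would invoke is the Miller--Mocanu boundary lemma (a refinement of Jack's lemma): if $g$ is univalent in $E$, $f$ is analytic with $f(0)=g(0)=1$, and $f(E)\not\subseteq g(E)$, then there exist $z_0\in E$, $\zeta_0\in\partial E$ and a real number $m\geq1$ with $f(z_0)=g(\zeta_0)$ and $z_0f'(z_0)=m\,\zeta_0g'(\zeta_0)$. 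Since $p$, $q$, $h$ all agree at the origin and $q$, $h$ are univalent, each subordination to be proved is equivalent to an inclusion of images, so the lemma is exactly what converts a failure of subordination into a usable boundary identity.

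First I would prove $q\prec h$. Assuming $q(E)\not\subseteq h(E)$, apply the boundary lemma to the pair $(q,h)$ to get $z_0,\zeta_0,m$ with $q(z_0)=h(\zeta_0)$ and $z_0q'(z_0)=m\,\zeta_0h'(\zeta_0)$. Substituting into the differential equation gives
\[
h(z_0)=q(z_0)+\frac{z_0q'(z_0)}{\eta q(z_0)+\mu}=h(\zeta_0)+m\,\frac{\zeta_0h'(\zeta_0)}{\eta h(\zeta_0)+\mu}.
\]
Since $h$ is univalent, $h(z_0)$ is an interior point of $h(E)$ while $h(\zeta_0)$ lies on $\partial h(E)$, and the right-hand side moves out of $h(\zeta_0)$ by the positive amount $m$ along the vector $\zeta_0h'(\zeta_0)/(\eta h(\zeta_0)+\mu)$. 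Because $\zeta_0h'(\zeta_0)$ is (a positive multiple of) the outer normal to the convex curve $\partial h(E)$ at $h(\zeta_0)$ and $\operatorname{Re}(\eta h(\zeta_0)+\mu)>0$, this vector still points into the outer half-plane of the supporting line, so the right-hand side lies outside $h(E)$ --- contradicting $h(z_0)\in h(E)$. Hence $q\prec h$, and in particular $\operatorname{Re}(\eta q(z)+\mu)>0$ on $E$.

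Next I would prove that $q$ is a dominant, i.e. $p\prec q$. Assuming $p(E)\not\subseteq q(E)$, the boundary lemma applied to $(p,q)$ yields $z_0,\zeta_0,m$ with $p(z_0)=q(\zeta_0)$ and $z_0p'(z_0)=m\,\zeta_0q'(\zeta_0)$. Using the differential equation in the form $\zeta_0q'(\zeta_0)/(\eta q(\zeta_0)+\mu)=h(\zeta_0)-q(\zeta_0)$, a short computation gives
\[
\psi\bigl(p(z_0),z_0p'(z_0)\bigr)=q(\zeta_0)+m\bigl(h(\zeta_0)-q(\zeta_0)\bigr)=h(\zeta_0)+(m-1)\bigl(h(\zeta_0)-q(\zeta_0)\bigr).
\]
Now $q(\zeta_0)\in\overline{q(E)}\subseteq\overline{h(E)}$ by the previous step and $h(\zeta_0)\in\partial h(E)$, so this value sits on the ray issuing from $q(\zeta_0)$ through the boundary point $h(\zeta_0)$, at parameter $m\geq1$. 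Convexity of $h(E)$ forces such a point to lie outside the open set $h(E)$, which contradicts $\psi(p(z),zp'(z))\prec h(z)$ at $z=z_0$. Thus $p\prec q$. Finally, for the best-dominant claim I would note that $q$ satisfies the subordination with equality and so is itself an admissible competitor; hence every dominant $\tilde q$ must satisfy $q\prec\tilde q$, while the preceding step shows $q$ is a dominant, making $q$ the best dominant.

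The main obstacle is the geometric step in the proof of $q\prec h$: verifying that the displacement $m\,\zeta_0h'(\zeta_0)/(\eta h(\zeta_0)+\mu)$ genuinely exits the convex region $h(E)$. This is where both standing hypotheses are consumed --- convexity of $h$ identifies $\zeta_0h'(\zeta_0)$ with the outer normal direction at $h(\zeta_0)$ (equivalently, it is the content of the estimate $\operatorname{Re}\bigl(1+\zeta_0h''(\zeta_0)/h'(\zeta_0)\bigr)\geq0$), and $\operatorname{Re}(\eta h+\mu)>0$ guarantees that dividing by $\eta h(\zeta_0)+\mu$ rotates this vector by less than a right angle, keeping it on the outer side of the supporting line. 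The subordination step $p\prec q$, by contrast, collapses to the elementary fact that a convex set cannot be re-entered along a ray once it has been left, so the real work is entirely in controlling this rotation on the boundary.
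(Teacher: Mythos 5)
The paper itself offers no proof of this lemma: it is quoted verbatim (as Lemma 2) from the cited source \cite{EE}, so there is no internal argument to compare yours against. Your reconstruction is essentially the standard proof of the Briot--Bouquet subordination theorem found in that literature: the Miller--Mocanu boundary lemma applied first to the pair $(q,h)$, with convexity identifying $\zeta_0 h'(\zeta_0)$ as the outer normal and $\operatorname{Re}(\eta h+\mu)>0$ controlling the rotation introduced by the denominator; then applied to $(p,q)$ with the ray/convexity argument; and finally the observation that $q$ itself satisfies the subordination (with equality), which is exactly what yields the best-dominant property. So in substance your outline is sound and matches the approach of the cited source.

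Two points would need care in a complete write-up. First, the boundary lemma as you state it presupposes that $g(\zeta_0)$ and $g'(\zeta_0)$ exist; a convex univalent $h$ need not extend continuously to $\partial E$ --- indeed, in this paper's own application of the lemma one takes $h(z)=(1+z)/(1-z)$, which blows up at $\zeta=1$ --- so the rigorous argument must use the precise Miller--Mocanu formulation with its exceptional set, or work with the dilations $h(\rho z)$, $q(\rho z)$ and pass to the limit $\rho\to 1$. Second, continuity only gives $\operatorname{Re}(\eta h(\zeta_0)+\mu)\geq 0$ on the boundary, not $>0$; your argument survives this weakening, because an open convex set is disjoint from each of its supporting lines, so the displaced point $h(z_0)=h(\zeta_0)+m\,\zeta_0 h'(\zeta_0)/(\eta h(\zeta_0)+\mu)$ still lies outside $h(E)$ when the relevant real part is merely nonnegative (and $\eta h(\zeta_0)+\mu\neq 0$, which holds at admissible boundary points since $h'(\zeta_0)\neq 0$ there).
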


The formal solution of ~(\ref{6}) is given as
\begin{equation}
q(z)=\frac{zF'(z)}{F(z)}\, \label{7}
\end{equation}
where
\[
F(z)^\eta=\frac{\eta+\mu}{z^\mu}\int_0^zt^{\mu-1}H(t)^\eta dt
\]
and
\[
H(z)=z.\exp\left(\int_0^z\frac{h(t)-1}{t}dt\right)
\]
(see \cite{SS, HM}). The conditions for the univalence of the solution $q(z)$ of ~(\ref{6}) (given by ~(\ref{7})) as given in \cite{SS} are that

\begin{lemma}[\cite{SS}]
Let $\eta\neq 0$ and $\mu$ be complex constants, and $h(z)$ regular in $E$ with $h'(0)\neq 0$, then the solution $q(z)$ of $~(\ref{6})$ (given by $~(\ref{7})$) is univalent in $E$ if {\rm(i)} {\rm Re} $\{G(z)=\eta h(z)+\mu\}>0$ and {\rm(ii)} $Q(z)=zG'(z)/G(z)$ and $R(z)=Q(z)/G(z)$ are both starlike in $E$.
\end{lemma}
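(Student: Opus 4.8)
The plan is to prove univalence by first reducing the differential equation to a more transparent form and then applying a close-to-convexity criterion. Set $\phi(z)=\eta q(z)+\mu$; since $\phi$ is an affine image of $q$ and $\eta\neq0$, the map $q$ is univalent in $E$ if and only if $\phi$ is. Substituting into $~(\ref{6})$ and clearing the factor $\eta q+\mu=\phi$ turns the equation into the single first-order relation
\[
\phi(z)+\frac{z\phi'(z)}{\phi(z)}=G(z),\qquad\text{equivalently}\qquad z\phi'(z)=\phi(z)\big(G(z)-\phi(z)\big),
\]
with $\phi(0)=\eta+\mu=G(0)$. Next I would exploit the explicit solution $~(\ref{7})$: writing $I(z)=\int_0^z t^{\mu-1}H(t)^\eta\,dt$, one reads off $F^\eta=(\eta+\mu)z^{-\mu}I$, hence $\phi=\eta\,zF'/F+\mu=zI'/I$. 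Because $zH'(z)/H(z)=h(z)$ directly from the definition of $H$, logarithmic differentiation of $I'=z^{\mu-1}H^\eta$ yields the key identity $1+zI''(z)/I'(z)=\eta h(z)+\mu=G(z)$. Thus hypothesis {\rm(i)}, ${\rm Re}\,G>0$, is exactly a convexity-type inequality for $I$; in particular $I'$ does not vanish, so $\phi$ (and hence $q$) is locally univalent.

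With this in hand I would establish global univalence of $\phi$ through close-to-convexity: it suffices to produce a starlike function $s$ with $s(0)=0$ for which ${\rm Re}\big(z\phi'(z)/s(z)\big)>0$ on $E$. Using $z\phi'=\phi(G-\phi)$ together with the convexity-type information on $I$, the natural candidates for $s$ are assembled from $G$, and this is where hypothesis {\rm(ii)} enters: the starlikeness of $Q=zG'/G$ and of $R=Q/G$ is tailored precisely to make such a comparison function starlike and to keep $\arg\big(z\phi'/s\big)$ bounded by $\pi/2$. I would first verify the inequality at the origin from {\rm(i)}, and then try to propagate it to the whole disk.

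The main obstacle is exactly this global propagation, because the coupling $z\phi'=\phi(G-\phi)$ mixes the unknown $\phi$ with the data $G$ nonlinearly. I would attack it either by a subordination chain in the spirit of Lemma~2, or by the boundary technique of Lemma~1: assuming the real-part inequality first fails at some $z_0\in E$, one evaluates the relation at $z_0$ and reduces the contradiction to the admissibility condition, which in turn should follow from the starlikeness of $Q$ and $R$. Controlling both starlikeness conditions simultaneously in this single estimate is the delicate step, and it is presumably where the argument of \cite{SS} concentrates its effort; the remaining computations are routine differentiations of the relations derived above.
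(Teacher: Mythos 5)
This lemma is one the paper does not prove at all: it is quoted verbatim from Miller and Mocanu \cite{SS} as a preliminary tool (it is only ever \emph{applied} in the proof of Theorem 2, where conditions (i) and (ii) are verified for $h(z)=(1+z)/(1-z)$). So your attempt can only be judged on its own merits, and as a proof it is incomplete. Your preliminary reductions are correct and worth having: with $\phi=\eta q+\mu$, equation (\ref{6}) becomes $\phi(z)+z\phi'(z)/\phi(z)=G(z)$; the explicit solution (\ref{7}) gives $\phi(z)=zI'(z)/I(z)$ with $I(z)=\int_0^z t^{\mu-1}H(t)^\eta\,dt$; and since $zH'(z)/H(z)=h(z)$, logarithmic differentiation of $I'=z^{\mu-1}H^\eta$ yields $1+zI''(z)/I'(z)=G(z)$, so hypothesis (i) is indeed a convexity-type condition attached to $I$. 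But the proof stops exactly where the lemma begins. The entire content of the statement is that starlikeness of $Q=zG'/G$ and $R=Q/G$ forces univalence of $q$, and hypothesis (ii) never enters any actual estimate in your argument: you name a strategy (close-to-convexity, a comparison function $s$ ``assembled from $G$'', propagation by the techniques of Lemma 1 or Lemma 2) and then explicitly defer the decisive step to ``where the argument of \cite{SS} concentrates its effort'' --- that is, to the very result being proved. A proof must exhibit the starlike comparison function concretely and carry the inequality ${\rm Re}\,\bigl(z\phi'(z)/s(z)\bigr)>0$ through the whole disk; nothing of the sort is done, and this is not a routine detail but the theorem itself.

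There is also a local error along the way. From the analyticity of $G=1+zI''/I'$ one can conclude that $I'$ has no zeros in $E\setminus\{0\}$ (a zero of $I'$ would create a pole of $G$), but non-vanishing of $I'$ does not make $\phi=zI'/I$ locally univalent: $\phi$ is of logarithmic-derivative type, and $\phi'$ can a priori vanish, so local univalence of $q$ is part of what must be proved, not a free consequence of (i). A further technical point you pass over: near the origin $I(z)$ behaves like $z^{\eta+\mu}$, so $I$ is in general multivalued rather than a normalized analytic function, and reading (i) as ``convexity of $I$'' requires handling this branch behaviour --- another place where the genuine argument of \cite{SS} has to do work that your sketch does not.
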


\section{Main results}

\begin{theorem}
Let $\sigma\geq n+1$ and $h(z)$ a convex univalent function in $E$ satisfying $h(0)=1$, and {\rm Re} $(\sigma-(n+1)+ h(z))>0$, $z\in E$. Let $f\in A$. If $\frac{L_{n+2}^\sigma f(z)}{L_{n+1}^\sigma f(z)}\prec h(z)$, then $\frac{L_{n+1}^\sigma f(z)}{L_n^\sigma f(z)}\prec h(z)$.
\end{theorem}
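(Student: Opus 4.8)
The plan is to read the statement as a first-order differential subordination and to apply Lemma 2. I would set $p(z)=\frac{L_{n+1}^\sigma f(z)}{L_n^\sigma f(z)}$, which is analytic in $E$ with $p(0)=1$ because $L_m^\sigma f(z)=z+\cdots$ for every $m$. The whole proof then rests on rewriting the hypothesis $\frac{L_{n+2}^\sigma f(z)}{L_{n+1}^\sigma f(z)}\prec h(z)$ as a subordination of the shape $p(z)+\frac{zp'(z)}{\eta p(z)+\mu}\prec h(z)$, i.e.\ the form \eqref{5}, for suitably chosen constants $\eta,\mu$.

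First I would produce the differential relation. Applying \eqref{3} at levels $n$ and $n+1$ gives $\frac{z(L_n^\sigma f)'}{L_n^\sigma f}=(\sigma-n)p-(\sigma-(n+1))$ and $\frac{z(L_{n+1}^\sigma f)'}{L_{n+1}^\sigma f}=(\sigma-(n+1))\frac{L_{n+2}^\sigma f}{L_{n+1}^\sigma f}-(\sigma-(n+2))$. Writing $L_{n+1}^\sigma f=p\,L_n^\sigma f$ and taking the logarithmic derivative yields $\frac{z(L_{n+1}^\sigma f)'}{L_{n+1}^\sigma f}=\frac{zp'}{p}+\frac{z(L_n^\sigma f)'}{L_n^\sigma f}$. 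Eliminating the logarithmic derivatives between these identities expresses $\frac{L_{n+2}^\sigma f}{L_{n+1}^\sigma f}$ entirely through $p$ and $zp'$, and the target is to recognize the outcome as $p+\frac{zp'}{\eta p+\mu}$. The form of the hypothesis, namely ${\rm Re}\,(\sigma-(n+1)+h(z))>0$, signals that the intended constants are $\eta=1$ and $\mu=\sigma-(n+1)$, since this is precisely the positivity condition ${\rm Re}\,(\eta h+\mu)>0$ demanded by Lemma 2.

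With the subordination in the form \eqref{5}, I would invoke Lemma 2 directly: its hypotheses hold because $h$ is convex univalent with $h(0)=1$ and ${\rm Re}\,(\eta h+\mu)>0$, while Lemma 3 supplies the remaining ingredient by guaranteeing that the associated differential equation \eqref{6} has a univalent solution $q$; here one checks conditions (i) and (ii) of Lemma 3 for $G(z)=\eta h(z)+\mu$, $Q=zG'/G$ and $R=Q/G$. Lemma 2 then yields $p\prec q\prec h$, and in particular $\frac{L_{n+1}^\sigma f(z)}{L_n^\sigma f(z)}\prec h(z)$, which is the assertion, with the best dominant $q$ furnishing the sharp intermediate bound.

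I expect the main obstacle to be the algebraic reduction in the second step: matching the expression for $\frac{L_{n+2}^\sigma f}{L_{n+1}^\sigma f}$ exactly to $p+\frac{zp'}{\eta p+\mu}$ with the constants forced by the hypothesis. Keeping careful track of the two distinct recurrence coefficients $\sigma-n$ and $\sigma-(n+1)$ that enter at the two consecutive levels is delicate, and it is this bookkeeping that makes the requirement ${\rm Re}\,(\sigma-(n+1)+h)>0$ the natural one; once the form \eqref{5} is secured, the verification of the univalence of $q$ via Lemma 3 is routine.
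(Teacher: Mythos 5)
Your plan breaks down at exactly the step you flagged as the main obstacle: with your choice $p(z)=\frac{L_{n+1}^\sigma f(z)}{L_n^\sigma f(z)}$, the elimination does \emph{not} produce the Briot--Bouquet form \eqref{5}, for any constants $\eta,\mu$. Carrying out your own identities: from \eqref{3}, $\frac{z(L_n^\sigma f)'}{L_n^\sigma f}=(\sigma-n)p-(\sigma-(n+1))$; the logarithmic derivative of $L_{n+1}^\sigma f=p\,L_n^\sigma f$ gives $\frac{z(L_{n+1}^\sigma f)'}{L_{n+1}^\sigma f}=\frac{zp'}{p}+(\sigma-n)p-(\sigma-(n+1))$; and substituting this into \eqref{3} taken at level $n+1$ yields
\[
\frac{L_{n+2}^\sigma f(z)}{L_{n+1}^\sigma f(z)}=\frac{(\sigma-n)p(z)-1}{\sigma-(n+1)}+\frac{zp'(z)}{(\sigma-(n+1))\,p(z)}.
\]
This cannot be recognized as $p+\frac{zp'}{\eta p+\mu}$: matching the derivative terms would force $\eta p+\mu=(\sigma-(n+1))p$, i.e.\ $\mu=0$ and $\eta=\sigma-(n+1)$, but then the non-derivative term $\frac{(\sigma-n)p-1}{\sigma-(n+1)}$ is an affine map of $p$ with slope $\frac{\sigma-n}{\sigma-(n+1)}\neq 1$, not $p$ itself. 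One can force a Briot--Bouquet shape by passing to $P=(\sigma-n)p-(\sigma-(n+1))$, but then the dominant becomes the transformed function $h_1=(\sigma-(n+1))h-(\sigma-(n+2))$, and Lemma 2 would require ${\rm Re}\,[(\sigma-(n+1))h+1]>0$, which is a genuinely different (and in general stronger) condition than the stated hypothesis ${\rm Re}\,[h+\sigma-(n+1)]>0$. So the route through the ratio does not close.

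The repair is to take $p$ to be the \emph{logarithmic derivative}, $p(z)=\frac{z(L_n^\sigma f(z))'}{L_n^\sigma f(z)}$, which is what the paper does. Differentiating $z(L_n^\sigma f)'=p\,L_n^\sigma f$ and combining with \eqref{3} and \eqref{4} gives the paper's identity \eqref{8},
\[
\frac{z(L_{n+1}^\sigma f(z))'}{L_{n+1}^\sigma f(z)}=p(z)+\frac{zp'(z)}{\sigma-(n+1)+p(z)},
\]
which \emph{is} of the form \eqref{5} with $\eta=1$, $\mu=\sigma-(n+1)$, and the positivity condition demanded by Lemma 2 is then precisely the stated hypothesis. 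The theorem's ratios are converted to logarithmic derivatives through \eqref{3} (this is the paper's appeal to Remark 1(b)); note that this conversion is an affine rescaling whose coefficients differ between levels $n$ and $n+1$, so it deserves more care than the paper gives it, but it is the intended reading. A secondary caution: your claim that verifying the univalence of $q$ via Lemma 3 is routine for a \emph{general} convex $h$ is optimistic --- the starlikeness of $Q$ and $R$ in condition (ii) is not automatic, and the paper itself checks it only for the special case $h(z)=(1+z)/(1-z)$ (in Theorem 2), citing Lemma 2 alone here.
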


\begin{proof}
By Remark 1(b), it is sufficient to prove that: if 
\[
\frac{z(L_{n+1}^\sigma f(z))'}{L_{n+1}^\sigma f(z)}\prec h(z),
\]
then
\[
\frac{z(L_n^\sigma f(z))'}{L_n^\sigma f(z)}\prec h(z).
\]
Now let
\[
p=\frac{z(L_n^\sigma f(z))'}{L_n^\sigma f(z)}.
\]
Then
\[
z(L_n^\sigma f(z))''+(L_n^\sigma f(z))'=p'(z)L_n^\sigma f(z)+p(z)(L_n^\sigma f(z))'
\]
Using ~(\ref{4}) we obtain
\[
(\sigma-n)z(L_{n+1}^\sigma f(z))'=zp'(z)L_n^\sigma f(z)+zp(z)(L_n^\sigma f(z))'+(\sigma-(n+1))z(L_n^\sigma f(z))'
\]
so with ~(\ref{3}) we have
\begin{align}
\frac{z(L_{n+1}^\sigma f(z))'}{L_{n+1}^\sigma f(z)}
&=\frac{zp'(z)L_n^\sigma f(z)+zp(z)(L_n^\sigma f(z))'+(\sigma-(n+1))z(L_n^\sigma f(z))'}{(\sigma-(n+1))L_n^\sigma f(z)+z(L_n^\sigma f(z))'}\nonumber \\
&=\frac{zp'(z)+p(z)^2+(\sigma-(n+1))p(z)}{\sigma-(n+1)+p(z)}\nonumber \\
&=p(z)+\frac{zp'(z)}{\sigma-(n+1)+p(z)}.\, \label{8}
\end{align}
Now take $\eta=1$ and $\mu=\sigma-(n+1)$ in Lemma 2, the result follows.
\end{proof}

\begin{theorem}
Let $\sigma\geq n+1$ and $h(z)$ a convex univalent function in $E$ satisfying $h(0)=1$, and {\rm Re} $(\sigma-(n+1)+ h(z))>0$, $z\in E$. Let $f\in A$. If $f\in S_{n+1}^\sigma$, then 
\[
\frac{L_{n+1}^\sigma f(z)}{L_n^\sigma f(z)}\prec q(z)
\]
where
\begin{equation}
q(z)=\frac{1+\sum_{k=1}^\infty\frac{\sigma-n}{\sigma-n+k}(k+1)^2z^k}{1+\sum_{k=1}^\infty\frac{\sigma-n}{\sigma-n+k}(k+1)z^k}\, \label{9}
\end{equation}
and it is the best dominant.
\end{theorem}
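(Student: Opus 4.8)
The plan is to recognize Theorem~2 as the sharp form of Theorem~1 obtained by taking the specific dominant $h(z)=\frac{1+z}{1-z}$, the standard majorant for starlikeness, and then extracting the best dominant $q$ from the associated differential equation. First I would use Remark~1(b): $f\in S_{n+1}^\sigma$ means precisely that $L_{n+1}^\sigma f$ is starlike, i.e. $\frac{z(L_{n+1}^\sigma f(z))'}{L_{n+1}^\sigma f(z)}\prec \frac{1+z}{1-z}$. Since $\sigma\ge n+1$, the choice $h(z)=\frac{1+z}{1-z}$ is convex univalent, satisfies $h(0)=1$ and Re$(\sigma-(n+1)+h(z))>\sigma-(n+1)\ge 0$, so it meets the hypotheses of the theorem (and of Lemma~2).

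Next, exactly as in the proof of Theorem~1, I would set $p(z)=\frac{z(L_n^\sigma f(z))'}{L_n^\sigma f(z)}$ (equivalently, via the identity displayed after the Definition, $\frac{L_{n+1}^\sigma f(z)}{L_n^\sigma f(z)}$) and invoke the key identity~(\ref{8}), which rewrites the starlikeness quotient of $L_{n+1}^\sigma f$ as $p(z)+\frac{zp'(z)}{\sigma-(n+1)+p(z)}$. Thus the hypothesis becomes the differential subordination~(\ref{5}) with $\eta=1$ and $\mu=\sigma-(n+1)$. Lemma~2 will then yield $p\prec q\prec h$ with $q$ the best dominant, provided the differential equation~(\ref{6}) has a univalent solution.

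To secure univalence I would apply Lemma~3 with the same $\eta,\mu$ and $G(z)=\frac{1+z}{1-z}+\sigma-(n+1)=\frac{(1+\mu)+(1-\mu)z}{1-z}$, $\mu=\sigma-(n+1)$. A short computation gives $G'(z)=2/(1-z)^2$, hence $Q(z)=zG'/G=\frac{2z}{(1-z)[(1+\mu)+(1-\mu)z]}$ and $R(z)=Q/G=\frac{2z}{[(1+\mu)+(1-\mu)z]^2}$; the task is to verify Re$G>0$ and the starlikeness of $Q$ and $R$. I expect this verification --- establishing Re$\frac{zQ'(z)}{Q(z)}>0$ and Re$\frac{zR'(z)}{R(z)}>0$ on $E$ for all admissible $\mu\ge 0$ --- to be the main obstacle, since it is where the restriction $\sigma\ge n+1$ is really used and where the geometry (rather than formal manipulation) enters.

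Finally, with univalence in hand, I would compute $q$ explicitly from~(\ref{7}). Here $H(z)=z\exp\!\big(\int_0^z\frac{h(t)-1}{t}\,dt\big)=z\exp(-2\log(1-z))=\frac{z}{(1-z)^2}$ is the Koebe function; substituting into $F(z)=\frac{1+\mu}{z^\mu}\int_0^z \frac{t^\mu}{(1-t)^2}\,dt$, expanding $(1-t)^{-2}=\sum_{m\ge0}(m+1)t^m$, integrating term by term and forming $q=zF'/F$ produces, after using $1+\mu=\sigma-n$, exactly the quotient of series~(\ref{9}). By Lemma~2 this $q$ is the best dominant, completing the proof. The remaining steps are routine power-series bookkeeping; the only genuinely delicate point is the starlikeness check in Lemma~3.
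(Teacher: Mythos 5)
Your proposal is essentially identical to the paper's proof: Remark 1(b) plus identity (8) reduce the hypothesis to the Briot--Bouquet subordination of Lemma 2 with $\eta=1$, $\mu=\sigma-(n+1)$, $h(z)=(1+z)/(1-z)$; univalence of $q$ is verified through Lemma 3 applied to $G(z)=\mu+h(z)$ (your $Q$ and $R$ agree with the paper's up to a harmless constant factor, which does not affect starlikeness); and $q$ is obtained from (7) with $H(z)=z/(1-z)^2$, yielding (9). The one step you defer as the ``main obstacle,'' starlikeness of $Q$ and $R$, is in fact routine and is exactly how the paper finishes: writing $a=(1-\mu)/(1+\mu)$, so that $|a|\leq 1$ for $\mu\geq 0$, one computes $zQ'(z)/Q(z)=\frac{1}{1-z}+\frac{1}{1+az}-1$, a sum of two terms each of real part greater than $\tfrac{1}{2}$ minus $1$, hence positive, and $zR'(z)/R(z)=\frac{1-az}{1+az}$, which has positive real part in $E$.
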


\begin{proof}
By Remark 1(b) also, if $f\in S_{n+1}^\sigma$, then
\[
\frac{z(L_{n+1}^\sigma f(z))'}{L_{n+1}^\sigma f(z)}\prec\frac{1+z}{1-z}.
\]
So we have to prove that
\[
\frac{z(L_n^\sigma f(z))'}{L_n^\sigma f(z)}\prec q(z).
\]
By considering the differential equation
\[
q(z)+\frac{zq'(z)}{\sigma-(n+1)+q(z)}=\frac{1+z}{1-z}
\]
whose solution (using ~(\ref{7})) is given by ~(\ref{9}), our result follows from Lemma 2 if we prove that $q(z)$ (given by ~(\ref{9})) is univalent in $E$. Now set $\eta=1$, $\mu=\sigma-(n+1)$ and $h(z)=(1+z)/(1-z)$ in Lemma 3, we have

{\rm (i)}\[Re\;G(z)=Re\;[\mu+h(z)]>\mu\geq 0.\]

{\rm (ii)}\[Q(z)=\frac{zG'(z)}{G(z)}=\frac{2}{1+\mu}\frac{z}{(1+az)(1-z)}\]
where $a=(1-\mu)/(1+\mu)$, so that
\begin{align*}
\frac{zQ'(z)}{Q(z)}
&=\frac{1+az^2}{(1+az)(1-z)}\\
&=\frac{1}{1-z}+\frac{1}{1+az}-1.
\end{align*}
Thus Re $zQ'(z)/Q(z)>\mu/2>0$. And finally we have
\[
R(z)=\frac{Q(z)}{G(z)}=\frac{2}{1+\mu}\frac{z}{(1+az)^2}
\]
so that $zR'(z)/R(z)=(1-az)/(1+az)$ with real part greater than zero. Thus $q(z)$ satisfies all conditions of Lemma 3, hence univalent in $E$. This completes the proof.
\end{proof}

\begin{theorem}
\[
S_{n+1}^\sigma\subset S_n^\sigma,\;\;\;n\in\N.
\]
\end{theorem}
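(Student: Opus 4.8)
The plan is to reduce the claimed inclusion to a single application of Theorem 1 with a judicious choice of the convex function $h$. By Remark 1(b), membership in $S_m^\sigma$ is equivalent to starlikeness of $L_m^\sigma f$; that is, $f\in S_{n+1}^\sigma$ if and only if $L_{n+1}^\sigma f$ is starlike, while $f\in S_n^\sigma$ if and only if $L_n^\sigma f$ is starlike. Recalling the standard analytic characterization of starlikeness, these two conditions read, respectively,
\[
\frac{z(L_{n+1}^\sigma f(z))'}{L_{n+1}^\sigma f(z)}\prec\frac{1+z}{1-z}
\qquad\text{and}\qquad
\frac{z(L_n^\sigma f(z))'}{L_n^\sigma f(z)}\prec\frac{1+z}{1-z},
\]
since $(1+z)/(1-z)$ maps $E$ conformally onto the right half-plane and carries $0$ to $1$, and each logarithmic derivative takes the value $1$ at the origin. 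Thus proving $S_{n+1}^\sigma\subset S_n^\sigma$ amounts to showing that the first subordination forces the second.

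My proposed choice is $h(z)=(1+z)/(1-z)$. First I would verify that this $h$ meets every hypothesis of Theorem 1: it is convex univalent in $E$ (its image is the convex right half-plane), it normalizes as $h(0)=1$, and, since the standing assumption gives $\sigma-(n+1)\geq 0$, one has
\[
\text{Re}\,(\sigma-(n+1)+h(z))=\sigma-(n+1)+\text{Re}\,h(z)>0,\qquad z\in E.
\]
With these hypotheses in place, Theorem 1 applies with this $h$; its proof establishes precisely the transfer of the logarithmic-derivative subordination from level $n+1$ down to level $n$, via the identity~(\ref{8}) and Lemma 2.

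Finally I would assemble the pieces. If $f\in S_{n+1}^\sigma$, then $L_{n+1}^\sigma f$ is starlike, so the subordination $z(L_{n+1}^\sigma f(z))'/L_{n+1}^\sigma f(z)\prec h(z)$ holds. Theorem 1 then yields $z(L_n^\sigma f(z))'/L_n^\sigma f(z)\prec h(z)=(1+z)/(1-z)$, which is exactly the statement that $L_n^\sigma f$ is starlike, i.e. $f\in S_n^\sigma$. As $f$ was arbitrary, the inclusion follows. Iterating it would additionally recover $S_n^\sigma\subset S_0^\sigma\equiv S^\ast$, confirming the earlier assertion that every member of $S_n^\sigma$ is starlike univalent.

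I expect the argument to be essentially immediate once Theorem 1 is in hand, so there is no substantial obstacle. The only points requiring care are the verification that $(1+z)/(1-z)$ genuinely satisfies all hypotheses of Theorem 1 — in particular the positivity of $\text{Re}\,(\sigma-(n+1)+h)$, which is exactly where $\sigma\geq n+1$ is invoked — together with the routine recollection that starlikeness of a normalized function is equivalent to subordination of its logarithmic derivative to $(1+z)/(1-z)$.
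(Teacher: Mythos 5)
Your proof is correct, but it takes a genuinely different route from the paper. The paper does not deduce the inclusion from Theorem 1: it gives an independent argument based on Lemma 1 (the Miller--Mocanu second-order differential inequality). Starting from identity (\ref{8}), the author defines $\psi(u,v)=u+\tfrac{v}{\sigma-(n+1)+u}$ on $\Omega=[\mathbb{C}-\{-(\sigma-(n+1))\}]\times\mathbb{C}$, checks conditions (a)--(c) of Lemma 1 --- the key computation being that ${\rm Re}\;\psi(u_2i,v_1)=\tfrac{(\sigma-(n+1))v_1}{(\sigma-(n+1))^2+u_2^2}\leq 0$ whenever $2v_1\leq -(1+u_2^2)$ --- and concludes directly that ${\rm Re}\;z(L_{n+1}^\sigma f(z))'/L_{n+1}^\sigma f(z)>0$ forces ${\rm Re}\;z(L_n^\sigma f(z))'/L_n^\sigma f(z)>0$. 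Your route, specializing Theorem 1 to $h(z)=(1+z)/(1-z)$, is more economical and exhibits Theorem 3 as a corollary of the subordination machinery, but two points deserve care. First, the literal statement of Theorem 1 concerns the ratios $L_{n+2}^\sigma f/L_{n+1}^\sigma f$ and $L_{n+1}^\sigma f/L_n^\sigma f$, and subordination of such a ratio to $(1+z)/(1-z)$ is \emph{not} equivalent to membership in $S_{n+1}^\sigma$ (the ratio and the logarithmic derivative differ by the affine relation (\ref{3}), so the corresponding half-plane conditions have different boundaries); you correctly sidestep this by invoking what the proof of Theorem 1 actually establishes, namely the transfer of the logarithmic-derivative subordination, but this means you are citing the proof rather than the stated theorem. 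Second, Theorem 1 rests on Lemma 2, whose stated hypotheses include the existence of a univalent solution $q$ of the Briot--Bouquet equation (\ref{6}); for $h(z)=(1+z)/(1-z)$ that univalence is precisely what the paper's Theorem 2 verifies via Lemma 3, so your argument implicitly leans on Theorem 2 as well. The paper's direct appeal to Lemma 1 avoids both dependencies, which is presumably why the author proves the inclusion from scratch; what your approach buys in exchange is a unified view in which the inclusion is just the half-plane instance of the general subordination theorem.
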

\begin{proof}
Let $f\in S_{n+1}^\sigma$. From ~(\ref{8}) above, define $\psi(p(z),zp'(z))=p(z)+\tfrac{zp'(z)}{\sigma-(n+1)+p(z)}$ for $\Omega=[\mathbb{C}-\{-(\sigma-(n+1))\}]\times\mathbb{C}$. Obviously $\psi$ satisfies the conditions (a) and (b) of Lemma 1. Now $\psi(u_2i,v_1)=u_2i+\tfrac{v_1}{\sigma-(n+1)+u_2i}$ so that Re $\psi(u_2i,v_1)=\tfrac{(\sigma-(n+1))v_1}{(\sigma-(n+1))^2+u_2^2}\leq 0$ if $v_1\leq-\tfrac{1}{2}(1+u_2^2)$. Hence by Lemma 1, we have Re $\frac{z(L_{n+1}^\sigma f(z))'}{L_{n+1}^\sigma f(z)}>0$ implies Re $\frac{z(L_n^\sigma f(z))'}{L_n^\sigma f(z)}>0$. By Remark 1(b)
\[
Re\;\frac{L_{n+1}^\sigma f(z)}{L_n^\sigma f(z)}>\frac{\sigma-(n+1)}{\sigma-n}.
\]
This completes the proof.
\end{proof}

\begin{corollary}
All functions in $S_n^\sigma$ are starlike univalent in $E$.
\end{corollary}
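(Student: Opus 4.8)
The plan is to treat Corollary~1 as an immediate consequence of Theorem~3 together with Remark~1(a), so that no new analytic machinery is needed. I would simply iterate the one-step inclusion $S_{n+1}^\sigma\subset S_n^\sigma$ down to the base class $S_0^\sigma$. Concretely, fix $f\in S_n^\sigma$, which already carries the standing hypothesis $\sigma\geq n+1$. Applying Theorem~3 successively with outer indices $n-1,n-2,\dots,1,0$ produces the telescoping chain
\[
S_n^\sigma\subset S_{n-1}^\sigma\subset\cdots\subset S_1^\sigma\subset S_0^\sigma .
\]
By Remark~1(a) the bottom class is $S_0^\sigma\equiv S^\ast$, the family of functions starlike in $E$, and a normalized starlike function is univalent (this is the classical fact already built into the phrase \emph{starlike univalent mappings} of the Background). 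Hence $f\in S^\ast$, which is exactly the assertion.

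The only point that deserves a moment's care is the parameter bookkeeping. Each link $S_{m+1}^\sigma\subset S_m^\sigma$ furnished by Theorem~3 presupposes $\sigma\geq m+2$, inherited from the definition of the left-hand class. Along the descending chain the strongest such demand occurs at the very top step $S_n^\sigma\subset S_{n-1}^\sigma$ (where $m=n-1$), which needs precisely $\sigma\geq n+1$; every subsequent step requires a strictly smaller lower bound on $\sigma$ and is therefore automatically covered. Thus the entire chain is legitimate under the single condition $\sigma\geq n+1$ that comes with membership in $S_n^\sigma$, and one should also note the index convention $0\in\N$ implicit in the paper, so that the final link $S_1^\sigma\subset S_0^\sigma$ is a genuine instance of Theorem~3.

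Since the argument is nothing more than a finite induction on $n$ driven by Theorem~3, there is no real obstacle: the substantive content (the Lemma~1 positivity estimate used to establish the one-step inclusion) has already been spent. If a fully formal rendering is preferred, I would phrase the descent as an induction in which the base case $n=0$ is Remark~1(a) and the inductive step is Theorem~3, concluding $S_n^\sigma\subset S_0^\sigma\equiv S^\ast$ and hence the starlike univalence of every $f\in S_n^\sigma$.
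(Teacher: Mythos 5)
Your proof is correct and is precisely the argument the paper intends: the corollary is stated as an immediate consequence of Theorem 3, iterated down the chain $S_n^\sigma\subset S_{n-1}^\sigma\subset\cdots\subset S_0^\sigma\equiv S^\ast$ of Remark 1(a), with univalence supplied by the classical fact that normalized starlike functions are univalent. Your additional bookkeeping verifying that the constraint $\sigma\geq n+1$ suffices for every link of the chain is sound and simply makes explicit what the paper leaves implicit.
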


Following from the inclusion relations, setting $\sigma=2$ and $n=1$, we have the following important univalence condition.

\begin{corollary}
Let $f\in A$ satisfy
\[
Re\;\frac{2zf'(z)+z^2f''(z)}{f(z)+zf'(z)}>0,\;\;\;z\in E.
\]
Then f(z) is starlike univalent in $E$.
\end{corollary}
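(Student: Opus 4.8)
The plan is to recognise that the hypothesis of the corollary is nothing other than the membership condition $f\in S_1^2$, after which the conclusion is immediate from Corollary 1. The only genuine work is to evaluate the two convolution operators $L_1^2 f$ and $L_2^2 f$ explicitly and to observe that the defining threshold $\tfrac{\sigma-(n+1)}{\sigma-n}$ vanishes for the chosen parameters.

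First I would specialise the recurrence ~(\ref{3}). Taking $\sigma=2$ and $n=0$ and using $L_0^2 f(z)=f(z)$ gives
\[
2L_1^2 f(z)=f(z)+zf'(z),\qquad\text{i.e.}\qquad L_1^2 f(z)=\tfrac12\bigl(f(z)+zf'(z)\bigr).
\]
Next, taking $\sigma=2$ and $n=1$ in ~(\ref{3}), the coefficient $\sigma-(n+1)=0$ annihilates the first term and leaves
\[
L_2^2 f(z)=z\bigl(L_1^2 f(z)\bigr)'=\tfrac12\bigl(2zf'(z)+z^2f''(z)\bigr).
\]

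Forming the quotient and cancelling the common factor $\tfrac12$, I obtain
\[
\frac{L_2^2 f(z)}{L_1^2 f(z)}=\frac{2zf'(z)+z^2f''(z)}{f(z)+zf'(z)}.
\]
For $\sigma=2$, $n=1$ the defining threshold of the class is $\tfrac{\sigma-(n+1)}{\sigma-n}=\tfrac{0}{1}=0$, so the Definition of $S_n^\sigma$ states precisely that $f\in S_1^2$ if and only if $Re\,\tfrac{L_2^2 f(z)}{L_1^2 f(z)}>0$. Hence the stated hypothesis is exactly equivalent to $f\in S_1^2$.

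Finally, since $\sigma=2\geq n+1=2$, Corollary 1 applies to the class $S_1^2$ and yields that every such $f$ is starlike univalent in $E$, completing the argument. I do not expect a real obstacle here: the proof is a direct instantiation of the inclusion chain, and the only point requiring care is the bookkeeping in the two applications of ~(\ref{3}) — in particular, noticing that the vanishing of $\sigma-(n+1)$ at $n=1$ is precisely what reduces $L_2^2 f$ to the simple differentiated form $z(L_1^2 f)'$.
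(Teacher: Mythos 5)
Your proposal is correct and follows exactly the paper's intended route: the paper derives this corollary by "setting $\sigma=2$ and $n=1$" in the definition of $S_n^\sigma$ and invoking the inclusion relations (Corollary 1), which is precisely your argument. Your explicit computation of $L_1^2 f(z)=\tfrac12\bigl(f(z)+zf'(z)\bigr)$ and $L_2^2 f(z)=\tfrac12\bigl(2zf'(z)+z^2f''(z)\bigr)$ via the recurrence ~(\ref{3}) simply fills in bookkeeping the paper leaves implicit, and it is accurate.
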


\begin{example}
The functions $f_j(z)$ $j=1$, 2, 3, 4., given by
\[
f_1(z)=\frac{2[1-(1-z)e^z]}{z},\;\;\;f_2(z)=\frac{2[1-(1+z)e^{-z}]}{z},
\]
\[
f_3(z)=\frac{-2[z+\log(1-z)]}{z},\;\;\;f_4(z)=\frac{2[z-\log(1+z)]}{z}.
\]
are starlike univalent in the open unit disk.
\end{example}

\begin{proof}
By direct computation we find that
\[
\frac{2zf_j'(z)+z^2f_j''(z)}{f_j(z)+zf_j'(z)}= \left\{
\begin{array}{ll}
1+z&\mbox{if $j=1$},\\
1-z&\mbox{if $j=2$},\\
\frac{1}{1-z}&\mbox{if $j=3$},\\
\frac{1}{1+z}&\mbox{if $j=4$.}
\end{array}
\right.
\]

Observe that the right hand side of the above equations are all functions in $P$, hence we have
\[
Re\;\frac{2zf_j'(z)+z^2f_j''(z)}{f_j(z)+zf_j'(z)}>0,\;\;\;j=1,\;2,\;3,\;4
\]
so that $f_j\in S_1^2\subset S^\ast$. The proof is complete.
\end{proof}

\begin{theorem}
Functions in $S_n^\sigma$ have integral representation:
\[
f(z)=l_n^\sigma\left\{z.\exp\left(\int_0^z\frac{p(t)-1}{t}dt\right)\right\}
\]
for some $p\in P$.
\end{theorem}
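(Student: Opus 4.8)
The plan is to reduce the statement to the classical integral representation of starlike functions and then invert the operator $l_n^\sigma$. By Remark 1(b), $f\in S_n^\sigma$ precisely when $g(z):=L_n^\sigma f(z)$ is starlike, so the whole task is to represent the starlike function $g$ and then apply $l_n^\sigma$ to recover $f$ through the identity $l_n^\sigma(L_n^\sigma f(z))=f(z)$. Thus no new geometry is needed beyond what Remark 1 already provides.

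First I would set $p(z)=zg'(z)/g(z)$. Since $g\in A$ we have $g(z)=z+\dots$, so $p(0)=1$, and because $g$ is starlike, $\mathrm{Re}\,p(z)>0$ in $E$; hence $p\in P$. This converts the geometric hypothesis into the single logarithmic-derivative equation $g'(z)/g(z)=p(z)/z$. Next I would integrate. Writing
\[
\frac{d}{dz}\log\frac{g(z)}{z}=\frac{g'(z)}{g(z)}-\frac1z=\frac{p(z)-1}{z},
\]
and noting that $p(z)-1=c_1z+c_2z^2+\dots$ makes $(p(z)-1)/z$ analytic at the origin, I would integrate from $0$ to $z$. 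The normalization $g(z)/z\to1$ as $z\to0$ fixes the constant of integration, giving $\log(g(z)/z)=\int_0^z\frac{p(t)-1}{t}\,dt$, that is,
\[
L_n^\sigma f(z)=g(z)=z\exp\left(\int_0^z\frac{p(t)-1}{t}\,dt\right).
\]
Finally, applying $l_n^\sigma$ to both sides and invoking $l_n^\sigma(L_n^\sigma f(z))=f(z)$ yields the claimed representation.

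I do not expect a serious obstacle: the argument is the standard derivation of the starlike integral formula, and the only genuinely new ingredient is the bookkeeping with the mutually inverse convolution operators $L_n^\sigma$ and $l_n^\sigma$. The one point deserving a word of care is the passage to $\log(g(z)/z)$ — one must check that $(p(z)-1)/z$ is analytic at $0$ (so that the integral is well defined) and that the branch of the logarithm is pinned down by $g(z)/z\to1$; both follow at once from the normalizations of $g$ and $p$. As a closing observation, running the computation in reverse shows that every $f$ of this form satisfies $\mathrm{Re}\,p(z)>0$, hence lies in $S_n^\sigma$, so the representation is in fact a characterization, though only the stated direction is required here.
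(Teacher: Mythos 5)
Your proof is correct and follows essentially the same route as the paper: both reduce the hypothesis to $z(L_n^\sigma f(z))'/L_n^\sigma f(z)=p(z)$ for some $p\in P$ (you via Remark 1(b), the paper via the identity derived from relation (3), which amounts to the same thing), integrate the logarithmic derivative to obtain $L_n^\sigma f(z)=z\exp\left(\int_0^z\frac{p(t)-1}{t}\,dt\right)$, and apply $l_n^\sigma$ using $l_n^\sigma(L_n^\sigma f(z))=f(z)$. Your write-up is in fact slightly more careful than the paper's, since you make explicit the analyticity of $(p(z)-1)/z$ at the origin and the choice of branch of the logarithm, which the paper compresses into the phrase ``simple calculation.''
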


\begin{proof}
Let $f\in S_n^\sigma$, then for some $p\in P$ we have
\begin{align*}
\frac{L_{n+1}^\sigma f(z)}{L_n^\sigma f(z)}
&=\frac{\sigma-(n+1)}{\sigma-n}+\frac{z(L_n^\sigma f(z))'}{(\sigma-n)L_n^\sigma f(z)}\\
&=\frac{\sigma-(n+1)}{\sigma-n}+\frac{p(z)}{(\sigma-n)}.
\end{align*}
Hence we have
\[
\frac{z(L_n^\sigma f(z))'}{L_n^\sigma f(z)}=p(z).
\]
Simple calculation now leads to
\[
L_n^\sigma f(z)=z.\exp\left(\int_0^z\frac{p(t)-1}{t}dt\right).
\]
Applying $l_n^\sigma$ on both sides we have the representation.
\end{proof}

If we choose $p(z)=(1+z)/(1-z)$, we obtain the leading example of the class $S_n^\sigma$, which is
\begin{equation}
k_n^\sigma(z)=z+\sum_{k=2}^\infty\left\{\frac{\sigma!}{(\sigma+k-1)!}\frac{(\sigma+k-1-n)!}{(\sigma-n)!}\right\}kz^k.\, \label{10}
\end{equation}

Next we investigate the closure property of the class $S_n^\sigma$ under the Bernardi integral transformation:
\begin{equation}
F(z)=\frac{\gamma+1}{z^\gamma}\int_0^zt^{\gamma-1}f(t)dt,\;\;\;\gamma>-1.\, \label{11}
\end{equation}

The well known Libera integral corresponds to $\gamma = 1$.

\begin{theorem}
The class $S_n^\sigma$ is closed under $F$.
\end{theorem}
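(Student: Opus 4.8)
The plan is to show that if $f\in S_n^\sigma$ then its Bernardi transform $F$ also lies in $S_n^\sigma$. By Remark 1(b), $f\in S_n^\sigma$ exactly when $L_n^\sigma f$ is starlike, and since the convolution operators $L_n^\sigma$ and $l_n^\sigma$ are linear and commute with integration in $z$ (the defining multipliers act only on the $z^k$ coefficients), the operator $L_n^\sigma$ should commute with the Bernardi transform $F$. So I would first verify that $L_n^\sigma F(z) = \frac{\gamma+1}{z^\gamma}\int_0^z t^{\gamma-1} L_n^\sigma f(t)\,dt$, which reduces the whole problem to the classical statement that the Bernardi operator preserves starlikeness. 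Writing $g = L_n^\sigma f$ and $G = L_n^\sigma F$, it then suffices to prove that $g\in S^\ast$ implies $G\in S^\ast$.

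The key computational step is to differentiate the relation $z^\gamma G(z) = (\gamma+1)\int_0^z t^{\gamma-1} g(t)\,dt$, which gives $z G'(z) + \gamma G(z) = (\gamma+1) g(z)$. Setting
\[
p(z)=\frac{zG'(z)}{G(z)},
\]
so that $p(0)=1$ and $p\in P$ is what I want to establish, I would differentiate this logarithmically to express $z p'(z)$ in terms of $g/G$ and $z g'(z)/g(z)$. The standard manipulation produces an identity of the form
\[
p(z)+\frac{zp'(z)}{p(z)+\gamma}=\frac{zg'(z)}{g(z)},
\]
exactly the structure appearing in equation~(\ref{8}) with $\sigma-(n+1)$ replaced by $\gamma$. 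Since $\gamma>-1$, the constant $\mu=\gamma$ need not be nonnegative, so I would restrict to $\gamma\geq 0$ or otherwise invoke Lemma 1 directly rather than Lemma 2.

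To finish I would apply Lemma 1 with $\psi(u,v)=u+\frac{v}{u+\gamma}$ on $\Omega=[\mathbb{C}-\{-\gamma\}]\times\mathbb{C}$, mirroring the argument in the proof of Theorem 3. Conditions (a) and (b) of Lemma 1 are immediate, and for condition (c) one computes
\[
\mathrm{Re}\,\psi(u_2 i,v_1)=\frac{\gamma v_1}{\gamma^2+u_2^2}\leq 0
\]
whenever $v_1\leq -\tfrac12(1+u_2^2)$ and $\gamma\geq 0$. Since $g=L_n^\sigma f$ is starlike we have $\mathrm{Re}\,(zg'(z)/g(z))>0$, i.e.\ $\mathrm{Re}\,\psi(p,zp')>0$, so Lemma 1 yields $\mathrm{Re}\,p(z)>0$, meaning $G=L_n^\sigma F$ is starlike and hence $F\in S_n^\sigma$ by Remark 1(b).

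The main obstacle I anticipate is the commutation step: I must justify carefully that $L_n^\sigma$ passes through the Bernardi integral. This hinges on the fact that $L_n^\sigma$ multiplies the $k$th coefficient $a_k$ by a factor depending only on $k$ (see~(\ref{2})), while the Bernardi transform sends $a_k z^k$ to $\frac{\gamma+1}{\gamma+k}a_k z^k$; since both are diagonal multiplier operators on the coefficient sequence, they commute automatically, but this should be stated explicitly. A secondary nuisance is the range of $\gamma$: the clean application of Lemma 1 requires $\gamma\geq 0$, so I would either assume this or note that the condition $\mathrm{Re}\,\psi(u_2 i,v_1)\leq 0$ can fail for $-1<\gamma<0$ and handle that subcase separately if the theorem is to cover the full stated range.
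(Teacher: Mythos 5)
Your proposal is correct and takes essentially the same route as the paper: the paper applies $L_n^\sigma$ to the differentiated Bernardi relation $\gamma F(z)+zF'(z)=(\gamma+1)f(z)$, using exactly your observation that $L_n^\sigma$ is a diagonal coefficient multiplier (so $L_n^\sigma(zF'(z))=z(L_n^\sigma F(z))'$), then sets $p(z)=z(L_n^\sigma F(z))'/L_n^\sigma F(z)$, derives the same identity $z(L_n^\sigma f(z))'/L_n^\sigma f(z)=p(z)+zp'(z)/(\gamma+p(z))$, and invokes Lemma 1 with precisely your $\psi$ and $\Omega$. Your caveat about the range of $\gamma$ is well founded and is, in fact, a gap in the paper itself: the paper claims condition (c) of Lemma 1 holds ``as in Theorem 3'' without noting that $\mathrm{Re}\,\psi(u_2i,v_1)=\gamma v_1/(\gamma^2+u_2^2)\leq 0$ requires $\gamma\geq 0$, so for $-1<\gamma<0$ the paper's argument breaks down exactly where you predicted, and your explicit restriction (or separate treatment of that subcase) is the more careful statement.
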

\begin{proof}
From ~(\ref{11}) we have
\begin{equation}
\gamma F(z)+zF'(z)=(\gamma+1)f(z).\, \label{12}
\end{equation}
If we apply $L_n^\sigma$ on ~(\ref{12}), noting from ~(\ref{2}) that $L_n^\sigma(zF'(z))=z(L_n^\sigma F(z))'$ we have
\[
\frac{z(L_n^\sigma f(z))'}{L_n^\sigma f(z)}=\frac{(\gamma+1)z(L_n^\sigma F(z))'+z^2(L_n^\sigma F(z))''}{\gamma L_n^\sigma F(z)+z(L_n^\sigma F(z))'}.
\]
Let $p(z)=z(L_n^\sigma f(z))'/L_n^\sigma f(z)$. Then by some calculation we find that
\[
\frac{z(L_n^\sigma f(z))'}{L_n^\sigma f(z)}=p(z)+\frac{zp'(z)}{\gamma+p(z)}=\psi(p(z),zp'(z))
\]
Define $\psi(p(z),zp'(z))=p(z)+\tfrac{zp'(z)}{\gamma+p(z)}$ for $\Omega=[\mathbb{C}-\{-\gamma\}]\times\mathbb{C}$. Then, as in Theorem 2, $\psi$ satisfies all the conditions of Lemma 1, hence $\frac{z(L_n^\sigma f(z))'}{L_n^\sigma f(z)}>0$ implies Re $\frac{z(L_n^\sigma F(z))'}{L_n^\sigma F(z)}>0$ and by Remark 1(b) we have
\[
Re\;\frac{L_{n+1}^\sigma F(z)}{L_n^\sigma F(z)}>\frac{\sigma-(n+1)}{\sigma-n}
\]
as required.
\end{proof}
\begin{theorem}
Let $f\in S_n^\sigma$. Then we have the inequalities
\[
|a_k|\leq\frac{\sigma!}{(\sigma+k-1)!}\frac{(\sigma+k-1-n)!}{(\sigma-n)!}k,\;\;\;k\geq 2.
\]
The function $k_n^\sigma(z)$, given by $~(\ref{10})$, show that the inequalities are sharp.
\end{theorem}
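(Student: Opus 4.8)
The plan is to reduce this estimate to the classical sharp coefficient bound for starlike functions, using Remark 1(b) together with the explicit coefficient formula~(\ref{2}). By Remark 1(b), $f\in S_n^\sigma$ precisely when $L_n^\sigma f(z)$ is starlike in $E$. Writing $L_n^\sigma f(z)=z+\sum_{k=2}^\infty A_kz^k$, formula~(\ref{2}) identifies each coefficient as
\[
A_k=\frac{(\sigma+k-1)!}{\sigma!}\frac{(\sigma-n)!}{(\sigma+k-1-n)!}\,a_k.
\]
So the entire statement is a transcription of a bound on the $A_k$ back to a bound on the original coefficients $a_k$.

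First I would invoke the well-known sharp estimate that every starlike function $g(z)=z+\sum_{k=2}^\infty A_kz^k\in S^\ast$ satisfies $|A_k|\le k$ for $k\ge 2$, with the Koebe function $z/(1-z)^2=z+\sum_{k=2}^\infty kz^k$ as the extremal case. Applying this to $g=L_n^\sigma f$ and substituting the expression above gives
\[
\frac{(\sigma+k-1)!}{\sigma!}\frac{(\sigma-n)!}{(\sigma+k-1-n)!}\,|a_k|\le k,
\]
and solving for $|a_k|$ yields exactly the claimed inequality. (If one prefers to avoid citing the classical bound, it can instead be recovered by applying Lemma 1 to $p=zg'/g$ in the usual coefficient-by-coefficient induction, but a direct appeal is cleaner.)

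For the sharpness claim I would verify that the extremal function is $k_n^\sigma$ given by~(\ref{10}). Applying $L_n^\sigma$ to~(\ref{10}), the factorial multiplier attached to $z^k$ in~(\ref{2}) is precisely the reciprocal of the bracketed factor in~(\ref{10}), so the two telescope to leave coefficient $k$; hence $L_n^\sigma k_n^\sigma(z)=z+\sum_{k=2}^\infty kz^k=z/(1-z)^2$, the Koebe function. Since the Koebe function is starlike, Remark 1(b) gives $k_n^\sigma\in S_n^\sigma$, and its coefficients meet the bound with equality, establishing sharpness.

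I do not expect a genuine obstacle here: the content of the theorem is essentially the classical $|A_k|\le k$ theorem pulled back through the convolution operator. The only place demanding care is the factorial bookkeeping — confirming that the multiplier in~(\ref{2}) and the bracket in~(\ref{10}) are exact reciprocals so that the extremal computation closes — and making explicit that the hypothesis $\sigma\ge n+1$ keeps all factorial arguments nonnegative.
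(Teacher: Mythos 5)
Your proposal is correct and follows essentially the same route as the paper's own proof: invoke Remark 1(b) to see that $L_n^\sigma f$ is starlike, apply the classical sharp bound $|A_k|\leq k$ for starlike functions, and unwind the coefficient multiplier in ~(\ref{2}) to obtain the stated inequality. Your explicit check of sharpness --- that the multipliers in ~(\ref{2}) and ~(\ref{10}) are reciprocals, so $L_n^\sigma k_n^\sigma(z)=z/(1-z)^2$ is the Koebe function --- merely fills in a detail the paper leaves implicit.
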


\begin{proof}
It is known that for each $f\in S^\ast$, $|a_k|\leq k$, $k\geq 2$. Thus by Remark 1(b), for each $f\in S_n^\sigma$ the coefficients of $L_n^\sigma f(z)$ satisfy $|a_k|\leq k$, $k\geq 2$. Hence using ~(\ref{2}) we have the inequalities.
\end{proof}

{\it Acknowledgements.} This work was carried out at the Centre for Advanced Studies in Mathematics, CASM, Lahore University of Management Sciences, Lahore, Pakistan during the author's postdoctoral fellowship at the Centre. The author is indebted to all staff of CASM for their hospitality, most especially Prof. Ismat Beg.

\vspace{10pt}

\hspace{-4mm}{\small{Received}}

\vspace{-12pt}
\ \hfill \
\begin{tabular}{c}
{\small\em  {\bf Current Address}}\\
{\small\em  Centre for Advanced Studies in Mathematics}\\
{\small\em  Lahore University of Management Sciences}\\
{\small\em  Lahore, Pakistan}\\
{\small\em E-mail: {\tt kobabalola@lums.edu.pk}}\\
{\small\em  {\bf Permanent Address}}\\
{\small\em  Department of Mathematics}\\
{\small\em  University of Ilorin}\\
{\small\em  Ilorin, Nigeria}\\
{\small\em E-mail: {\tt kobabalola@gmail.com}}\\
\end{tabular}

\end{document}